\newcommand{\mathd}{\mathrm{d}}
\newcommand{\nobracket}{}
\newcommand{\tmmathbf}[1]{\ensuremath{\boldsymbol{#1}}}
\newcommand{\tmop}[1]{\ensuremath{\operatorname{#1}}}
\newcommand{\tmtextit}[1]{\text{{\itshape{#1}}}}
\newenvironment{proof}{\noindent\textbf{Proof\ }}{\hspace*{\fill}$\Box$\medskip}
\newtheorem{lemma}{Lemma}
\newtheorem{theorem}{Theorem}
\begin{document}

\title{The dyadic Riesz vector I}

\author{Komla Domelevo, Stefanie Petermichl}

\maketitle

\begin{abstract}
  We derive a dyadic model operator for the Riesz vector. We show linear lower
  $L^p$ bounds for $1 < p < \infty$ between this model operator and the Riesz
  vector, when applied to functions with values in Banach spaces. By a lower
  bound we mean that the boundedness of the Riesz vector implies the
  boundedness of the dydic Riesz vector.
\end{abstract}

\section{Introduction}

Recently, a precise dyadic model was introduced by the authors, that allowed
for linear relations in the upper and lower $L^p$ estimates of the norms of
the Hilbert transform and its dyadic model operator with values in UMD Banach
spaces {\cite{DomPet2022a,DomPet2023a}}. The dyadic model for the Hilbert
transform was densely defined on the Haar system on $I_0 = [\nobracket 0, 1)
\nobracket$ by $\mathcal{S}_0 (\chi_{I_0}) = 0$, $\mathcal{S}_0 (h_{I_0}) = 0$
and for $I \in \mathcal{D} (I_0)$
\[ \mathcal{S}_0 : h_{I_{\pm}} \mapsto \pm h_{I_{\mp}} . \]
This dyadic operator shares many symmetries with the Hilbert transform: it is
antisymmetric, has a dyadic orthogonality and is an involution. We are now
interested in a model for the Riesz transforms. As we will explain below, the
natural choices for the dyadic Riesz transforms are sliced versions of
$\mathcal{S}_0$. If we are in dimension $d$, then we split the dyadic tree
into $d$ slices in the sense that the collection of all dyadic intervals
$\mathcal{D}$ in $[\nobracket 0, 1) \nobracket$ is split into $\mathcal{D}_m
(I_0) = \{ I \in \mathcal{D} (I_0) : | I | = 2^{- (l d + m)}, l \in \mathbb{N}
\}$ for $0 \leqslant m \leqslant d - 1$. The $k$th dyadic Riesz transform is
defined as $\mathcal{S}_k (\chi_{I_0}) = 0$ and acts non-trivially on Haar
functions on $\mathcal{D}_{k - 1}$:
\[ \mathcal{S}_k : \left\{ \begin{array}{lll}
     h_{\hat{I}_{\pm}} \mapsto \pm h_{\hat{I}_{\mp}} & : & I \in
     \mathcal{D}_{k - 1} (I_0)\\
     h_{\hat{I}_{\pm}} \mapsto 0 & : & I \in \mathcal{D} (I_0) \backslash
     \mathcal{D}_{k - 1} (I_0)
   \end{array} \right. \]
with the exception \ $\mathcal{S}_1 (h_{I_0}) = 0$. Notice that $\hat{I}$ is
the dyadic parent of $I$.

The choice is of course motivated by the estimates we obtain, but is also
very natural. One quite incomplete hint for this is the stochastic expression
that gives the Riesz transforms $R_k$ in $\mathbb{R}^d$ by Gundy--Varopoulos:
the martingale multiplier used in this representation is also slicing away
many increments:
\[ R_k (x) = c_d \lim_{y \rightarrow \infty} \mathbb{E} \left[ \int^{\tau}_0
   A_k \nabla P_t f (W_s^y) \mathd (W_s^y) | W^y_{\tau} = (0, x) \right], \]
where $W^y$ is the $d + 1$ dimensional Brownian motion in the upper half
space, stopped at time $\tau$ when hitting the boundary. Given the usual canonical 
base $\{e_0, e_1, \ldots e_k\} $, the matrix $A_k = e_k e^{\star}_0 - e_0
e^{\star}_k$ is very sparse, ignoring a number of increments of $\nabla P_t
f$.

The dyadic Riesz vector is then built in the usual way $\vec{\mathcal{S}} =
(\mathcal{S}_1, \ldots, \mathcal{S}_d)$. Again, we observe many similarities
with the continuous Riesz vector: antisymmetry, dyadic orthogonality,
involution.

In this note, we are interested when these operators act on UMD Banach space
valued functions. We will show that it is indeed possible to linearly dominate
the $L^p$ norm of this dyadic Riesz vector by that of the continuous Riesz
vector. In turn we also show the opposite direction in a separate text, namely
that the $L^p$ norm of the continuous Riesz vector can be linearly dominated
by the dyadic one.

\

These estimates are most interesting when the operators are applied to UMD
Banach space valued functions, but aspects of the precise estimates are also
very interesting in the real valued case: when estimating just one
$\mathcal{S}_k$ instead of the vector, the so obtained estimate in
proportional to the best estimates for the Hilbert transform, namely $\tan
(\pi / 2 p)$ for $1 < p \leqslant 2$ and $\cot (\pi / 2 p)$ for $2 < p <
\infty .$ Other texts that give estimates of the dyadic shifts are unable to
make use of orthogonality relations. On another note, the estimates are free
of dimension - this is an aspect absent in other texts, even when considering
dyadic shifts alone and not vectors thereof. A dimensional growth usually
translates into an increase of complexity of the shift operator and there is
at least a linear growth with complexity in the way these results are stated.
(We remark that in Treil's text {\cite{Tre2013a}}, the dimensional growth
arises from running the estimate for each slice, and our $\mathcal{S}_k$ are
sliced.)

Let us remind the reader that the `natural' representation of the continuous
Riesz vector by means of a stochastic integral does not give us dimension free
bounds. A minor (but very continuous trick) is required via the consideration
of another auxiliary stochastic integral to obtain dimensionless bounds (at a
cost of loss of orthogonality relations as well as a factor of 2). This trick
consisted of avoiding to collect $d$ terms in one row ($d$ being the dimension
of the input space) and by distributing these $d$ terms evenly at the cost of
a factor 2. Discrete estimates are almost always more difficult, so it is
remarkable that dimension free estimates can be made to carry over using the
methods we present in this paper, even though our focus lies on the linearity
of the norm relations.

Our method forces our hand to use purely odd dyadic operators that are time
faithful. This tends to be a class that can be more difficult to handle in
many circumstances because the operators are (in space) not local, sending
$h_{I_{\pm}} \mapsto \pm h_{I_{\mp}}$ instead of, say, $h_I \mapsto h_{I_+} -
h_{I_-}$ in case of the (local) classical shift (sometimes with a
normalization factor of $2^{- 1 / 2}$). Aside from these novelties in the real
valued case, our main result is of course the linear estimate for the vector
valued case, controlling the $L^p$ norm of $\vec{\mathcal{S}} =
(\mathcal{S}_1, \ldots, \mathcal{S}_d)$ by that of $\vec{R} = (R_1, \ldots,
R_d)$.

We make some remarks about the strategy and the novelty of techniques. Let us
first make some very easy, general remarks about the multipliers of Riesz
transforms and their actions on sinus and cosinus waves. The behavior on sinus
and cosinus waves is an important factor in our proof of the lower bound, that
works with sign tosses coded by \ $\tmop{sign} (\cos (\cdot))$ and
$\tmop{sign} (\sin (\cdot))$ square shaped waves that then relate back to the
continuous Riesz vector via a high oscillation argument. To see the lower
estimate, we take inspiration from Bourgain {\cite{Bou1983a}},
Geiss--Montgomery-Smith--Saksman {\cite{GeiMonSak2010a}} and the authors'
prior papers {\cite{DomPet2022a,DomPet2023a}}. If the reader already has read
these texts, then the following motivation will make sense (and otherwise it
likely will not before reading the proof).

The Riesz transforms $R_j$, $1 \leqslant j \leqslant d$ in $\mathbb{R}^d$
have Fourier multipliers $m_j = - i \frac{\xi_j}{| \xi |}, \xi \in
\mathbb{R}^d$. According to Geiss--Montgomery-Smith--Saksman, we know via
transference that the corresponding singular operator $\tilde{R}_j$ on
$\mathbb{T}^d$ with multiplier $\tilde{m}_j = - i \frac{n_j}{| n |}, n \in
\mathbb{Z}^d$ has the same norm, even if these operators act on functions with
values in Banach spaces. Starting out with dimension $d = 2$, we see that if
$j = 1$ and $n_2 = 0$ then the multiplier becomes $\tilde{m}_1 (n_1, 0) = - i
\frac{n_1}{| n |} = - i \tmop{sign} (n_1)$. In the Hilbert transform case, the
sign tosses on copies of $\mathbb{T}$ played a crucial role and the random
generators for the sign tosses were $\tmop{sqcos} (\theta) = \tmop{sign} (\cos
(\theta))$ and $\tmop{sqsin} (\theta) = \tmop{sign} (\sin (\theta))$. Now in
higher dimensions (say $d = 2$), our random generators sit on copies of
$\mathbb{T}^2$ taking the variable $\vec{\theta} = (\theta_1, \theta_2)$. The
function $\tmop{sqsin} \theta_1 = \tmop{sign} \sin \theta_1$ is constant in
variable $\theta_2$ and so its two dimensional Fourier transform is only
supported in $\mathbb{Z} \times \{ 0 \}$. One easily checks the action of
$\tilde{R}_j$ on the trigonometric functions $\sin \theta_1, \cos \theta_1,
\sin \theta_2, \cos \theta_2$. For example $\sin \theta_1$ as function on
$\mathbb{T}^2$ has Fourier support $\{ (- 1, 0), (1, 0) \}$ since $\sin
\theta_1 = \frac{1}{2 i} (e^{i \theta_1} - e^{- i \theta_1})$. Thus
$\tilde{R}_1 (\sin \theta_1) = - \cos \theta_1$ via the multipliers. Similarly
$\tilde{R}_1 (\cos \theta_1) = \sin \theta_1$, $\tilde{R}_2 (\cos \theta_1) =
0$, $\tilde{R}_2 (\sin \theta_1) = 0$. Another way to see this is to remark
that the Riesz transforms are directional Hilbert transforms. This observation
also motives the slicing of the dyadic shift. In the case of the Hilbert
transform in Bourgain and the author's recent text, there was a very direct
relation between the martingale multiplier or shift used by us and the
continuous operator. In the Riesz transform case, this is not quite so. In the
part of the argument using high oscillations, we find it necessary to
oscillate each component variable in each cluster, causing some interference,
where we no longer just have the signum multiplier of the Hilbert case. The
proof for the lower bound has to make use of estimates on the multiplier
kernel, a modification of a technique employed in \
Geiss--Montgomery-Smith--Saksman. \

\section{Main result}

Let $\mathcal{D}=\mathcal{D} (I_0)$ be the $L^2$ normalized dyadic system on
the unit interval $I_0$ and $\{ h^1_{I_0} \} \cup \{ h_I : I \in \mathcal{D}
\backslash \{ I_0 \} \}$ its Haar base. Recall that $\mathcal{S}_0$ is densely
defined by
\[ h^1_{I_0} \mapsto 0, h_{I_0} \mapsto 0, h_{I_{\pm}} \mapsto \pm h_{I_{\mp}}
\]
for $I \in \mathcal{D}$. Let us divide $\mathcal{D}$ into slices 0 through $d
- 1$: $\mathcal{D}_m = \{ I \in \mathcal{D}: | I | = 2^{- (l d + m)}, l \in
\mathbb{N}_0 \}$. Let for all $1 \leqslant j \leqslant d$ $\mathcal{S}_j$ be
densely defined by $\mathcal{S}_j : h_{I_0}^1 \mapsto 0$ and
\[ \mathcal{S}_j : \left\{ \begin{array}{lll}
     h_{\hat{I}_{\pm}} \mapsto \pm h_{\hat{I}_{\mp}} & : & I \in
     \mathcal{D}_{j - 1}\\
     h_{\hat{I}_{\pm}} \mapsto 0 & : & I \in \mathcal{D} \backslash
     \mathcal{D}_{j - 1}
   \end{array} \right. \]
admitting the exception $\mathcal{S}_1 : h_{I_0} \mapsto 0$. The dyadic Riesz
vector is $\vec{\mathcal{S}} = (\mathcal{S}_1, \ldots, \mathcal{S}_d)$. There
is a natural interpretation described below that places the dyadic system on a
cube of dimension $d$, where the dyadic increments have a directional instead
of a generational flavor and thus reminding us of the directional derivatives
in the continuous Riesz vector. The dyadic system with its slices is the most
concise way to state our main results. Let $1 < p < \infty$ and $X$ a UMD
space.

\begin{theorem}
  \label{th1}There exists a constant $c_0$ independent of $d$, $p$ and $X$
  such that
  \[ \| \vec{\mathcal{S}} \|_{L_X^p \rightarrow L_X^p (\ell^2)} \leqslant c_0
     \| \vec{R} \|_{L_X^p \rightarrow L_X^p (\ell^2)} . \]
\end{theorem}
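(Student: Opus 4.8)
The goal is a lower bound: bound the dyadic Riesz vector by the continuous one, uniformly in dimension, $p$, and the UMD space $X$. The strategy is the transference/high-oscillation approach pioneered by Bourgain and Geiss–Montgomery-Smith–Saksman, adapted in the authors' earlier Hilbert transform papers. The plan is to realize each dyadic shift $\mathcal{S}_j$ as a limit of conditional expectations of the continuous (or periodic) Riesz transform $\tilde R_j$ acting on cleverly chosen test functions built from sign-toss random variables $\mathrm{sqcos}$ and $\mathrm{sqsin}$. Concretely, I would set up copies of $\mathbb{T}^d$ indexed by dyadic intervals, place on the $j$th cluster of copies the square-wave generators in the $j$th coordinate direction (respecting the slicing $\mathcal{D}_{j-1}$), and exploit the fact that $\tilde R_j$ acts on $\sin\theta_j,\cos\theta_j$ exactly like a one-dimensional Hilbert transform while annihilating the other coordinate directions. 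That directional behavior is precisely what matches the slicing in the definition of $\vec{\mathcal{S}}$.

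**Key steps.** First, fix notation: expand an $X$-valued $f$ on $I_0$ in the Haar basis and write $\vec{\mathcal{S}}f$ explicitly; reduce (by density) to finitely supported Haar expansions. Second, encode the dyadic martingale difference structure by associating to each $I\in\mathcal{D}$ a pair of sign tosses; following the Hilbert-transform template, replace each Haar increment $h_{I_\pm}$ by a high-frequency trigonometric packet $\sin(N\theta)$ or $\cos(N\theta)$ on the appropriate copy of $\mathbb{T}^d$, with the coordinate direction determined by which slice $\mathcal{D}_m$ contains $I$. The action $h_{I_\pm}\mapsto\pm h_{I_\mp}$ should be mirrored by the Hilbert-type action $\cos\leftrightarrow\pm\sin$ coming from the multiplier $-i\,\mathrm{sign}(n_j)$ on $\mathbb{Z}\times\{0\}^{d-1}$. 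Third, use a high-oscillation (Riemann–Lebesgue / stationary phase) argument: as the frequencies $N\to\infty$ along lacunary sequences, cross terms between different copies and different intervals vanish in $L^p_X$, so the norm of the assembled test function converges to the $\ell^2$-combination controlled by the UMD property, and likewise the image under $\vec R$ converges to (a constant times) $\vec{\mathcal{S}}f$. Fourth, invoke transference (Geiss–Montgomery-Smith–Saksman) to pass between $\mathbb{R}^d$ and $\mathbb{T}^d$ with equal norms, and take a limit over the truncation of the Haar expansion and over the dimension $d$ of the ambient torus to get the clean dimension-free constant $c_0$.

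**Main obstacle.** The hard part, as the introduction flags, is that in the Riesz case — unlike the Hilbert case — there is no exact identity between the dyadic shift and a projection of $\tilde R_j$; oscillating every coordinate variable in every cluster to kill interference forces one to deal with the full multiplier $-i n_j/|n|$ rather than just $-i\,\mathrm{sign}(n_j)$, so error terms appear. Controlling these requires quantitative decay estimates on the Riesz kernel (the modification of the Geiss–Montgomery-Smith–Saksman technique mentioned in the introduction): one must show that the contribution of frequencies off the coordinate axes, or the discrepancy between $n_j/|n|$ and $\mathrm{sign}(n_j)$ on the relevant lacunary frequency supports, is negligible after the oscillation limit, and that this negligibility is uniform in $d$. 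I would budget most of the effort for this kernel estimate and for verifying that the UMD constant enters only through the square-function/martingale comparison and not through the dimension. The remaining steps — the combinatorial bookkeeping of slices and the Riemann–Lebesgue vanishing of cross terms — are routine once the template from the Hilbert transform papers is in place.
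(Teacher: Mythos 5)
Your overall strategy matches the paper's: encode the dyadic structure via $\mathrm{sqcos}/\mathrm{sqsin}$ sign tosses on copies of $\mathbb{T}^d$, respect the slicing $\mathcal{D}_{j-1}$ by placing the $j$th-coordinate generators on the $j$th slice, and run a high-oscillation transference argument of Bourgain/Geiss--Montgomery-Smith--Saksman type. You also correctly identify the central new difficulty, namely that modulating all coordinates in every cluster produces the full multiplier $-in_j/|n|$ rather than the pure signum, so a quantitative kernel/multiplier estimate is needed. In the paper that is Lemma~\ref{lmRmoved}: the modulation uses powers $A^{sd+1},\ldots,A^{sd+d}$ so that, by homogeneity and Taylor's theorem, the multiplier evaluated on the modulated frequency lattice differs from $-i\,\mathrm{sign}(l_{j-1}^k)$ (if $m=j-1$) or from $0$ (if $m\neq j-1$) by $O(A^{-1})$.

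There is, however, one genuine gap. You assert that ``in the Riesz case --- unlike the Hilbert case --- there is no exact identity between the dyadic shift and a projection of $\tilde R_j$,'' and conclude that only approximate comparison is available. This misreads what the introduction is warning about and, more importantly, it removes the step that makes the argument close. The paper's Lemma~\ref{lmHvsS} \emph{is} precisely such an exact identity: with $\pi_1$ the conditional expectation onto quarter-arc averages in the first variable, one has $\pi_1\tilde R_j\varphi_i^{\pm}=c_0\,\mathcal{S}_j\varphi_i^{\pm}$ for the test functions $\varphi_i^{\pm}$. The proof of Theorem~\ref{th1} dualizes $\langle\mathcal{S}_j f,\vec G\rangle$, uses this identity to replace $\mathcal{S}_j\varphi^{\sigma}$ by $c_0^{-1}\pi\tilde R_j\varphi^{\sigma}$, and then observes that the projection $\pi$ may be dropped under the pairing because the dual test functions $\varphi^{\rho}$ are constant on quarter arcs (so only diagonal terms with $k=r$, $m=n$, $m=j-1$ survive, and on those the projection is transparent). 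Only then does Lemma~\ref{lmRmoved} close the estimate. Without the projection identity your plan has no way to pass from an approximate directional-Hilbert action to the literal dyadic shift $\mathcal{S}_j$; a direct ``image converges to $c_0\vec{\mathcal{S}}f$'' claim is not available because the convergence lives at the level of dualized pairings, not pointwise.

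Two smaller issues: the dimension $d$ is fixed throughout --- there is no limit over $d$; dimension-freeness comes from dualizing against the full vector $\vec G\in L^q_{X^\star}(\ell^2)$ and summing over $j$ before applying the vector norm $\|\vec R\|_{L^p_X\to L^p_X(\ell^2)}$ once. And the ``lacunary frequencies $N\to\infty$'' should be replaced by the specific geometric modulation with a single large parameter $A$ and exponents $A^{sd+m+1}$, since the Taylor/homogeneity estimate for $n_j/|n|$ and the cross-term cancellation both rely on that particular separation of scales, not merely on lacunarity.
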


\

\section{The lower bound strategy}

In this section, we will prove Theorem \ref{th1}.

\subsection{The dyadic Haar system on the cube}

In the last section, we defined the dyadic Riesz vector as having component
operators that acted on a slice in the dyadic Haar representation. It is easy
to see and well known that the dyadic tree also generates a Haar system in the
unit cube in $d$ dimensions.

Let us agree on the following $d$ step Haar system on the unit cube $Q_0$. We
start with
\[ h^1_{Q_0} = \chi_{Q_0} \frac{1}{\sqrt{| Q_0 |}} \]
and for all dyadic subcubes of $Q \subset Q_0$ define the $2^d$ dyadic
children $Q_{\sigma}$ where $\sigma = (\sigma_j)^d_{j = 1} \in \{ - 1, 1
\}^d$. Split cubes in a $d$ step process so that we have for $0 \leqslant k
\leqslant d - 1$
\[ Q_{\sigma_1 \ldots \sigma_k} = \prod_{j = 1}^k Q_{\sigma_j}^j \times
   \prod_{j = k + 1}^d Q^j \]
Then $Q_{\sigma_1 \ldots \sigma_d}$ is again a cube. The iterated Haar system
is
\[ h_{Q_{\sigma_1 \ldots \sigma_k}} = \frac{\chi_{Q_{\sigma_1 \ldots \sigma_k
   +}} - \chi_{Q_{\sigma_1 \ldots \sigma_k -}}}{\sqrt{| Q_{\sigma_1 \ldots
   \sigma_k} |}} . \]
In our indexing, we will not make this distinction, so we just write the
sequence counting the sign tosses from $Q_0$, keeping in mind that we run
through the $d$ dimensions in an ordered fashion.

\subsection{Coding}

It is well known that dyadic intervals can be seen as outcomes of a sequence
of sign tosses. We will choose random generators for these sign tosses that
are convenient for working with Riesz transforms. Let $\vec{\theta}^l =
(\theta_0^l, \ldots, \theta_{d - 1}^l) \in \mathbb{T}^d$. By the square sinus,
$\tmop{sqsin} \theta$, and square cosinus, $\tmop{sqcos} \theta$, we mean
$\tmop{sqsin} \theta = \tmop{sign} \circ \sin \theta$ and $\tmop{sqcos} \theta
= \tmop{sign} \circ \cos \theta$ respectively. Let us generate the sequences
of sign tosses.
\begin{eqnarray*}
  \varepsilon_0 & : & \varepsilon_0 = \tmop{sqcos} \theta^0_0\\
  \varepsilon_{l d + m}^-, \varepsilon_{l d + m}^+ & : & \varepsilon_{l d +
  m}^- = \tmop{sqsin} \theta^l_m, \varepsilon_{l d + m}^+ = \tmop{sqcos}
  \theta^l_m, \\
  && \forall l \geqslant 0, 0 \leqslant m \leqslant d - 1 : l d + m
  \neq 0
\end{eqnarray*}
The Haar expansion of $f$ is
\begin{eqnarray*}
  &  & \langle f \rangle_{Q_0}\\
  & + & \langle f, h_{Q_0} \rangle h_{Q_0}\\
  & + & \langle f, h_{Q_{0 -}} \rangle h_{Q_{0 -}} + \langle f, h_{Q_{0 +}}
  \rangle h_{Q_{0 +}}\\
  & + & \langle f, h_{Q_{0 - -}} \rangle h_{Q_{0 - -}} + \langle f, h_{Q_{0 -
  +}} \rangle h_{Q_{0 - +}} + \langle f, h_{Q_{0 + -}} \rangle h_{Q_{0 + -}} +
  \langle f, h_{Q_{0 + +}} \rangle h_{Q_{0 + +}}\\
  & + & \ldots
\end{eqnarray*}
where in each row only one summand is active, depending on the position of
$x$, due to disjoint supports of the Haar functions in each row. Pathwise
these are, again with just one summand active in each row, depending on the
outcome of the previous sign tosses ($\mathd f_k^{\mp}$ is active if the prior
sign toss from $\varepsilon^-_{k - 1}$ or $\varepsilon^+_{k - 1}$ was $\mp$)
\begin{eqnarray*}
  &  & \mathd f_{- 1}\\
  & + & \mathd f_0 \varepsilon_0\\
  & + & \mathd f^-_1 (\varepsilon_0) \varepsilon^-_1 + \mathd f^+_1
  (\varepsilon_0) \varepsilon^+_1\\
  & + & \mathd f^-_2 (\varepsilon_0, \varepsilon^-_1, \varepsilon^+_1)
  \varepsilon^-_2 + \mathd f^+_2 (\varepsilon_0, \varepsilon^-_1,
  \varepsilon^+_1) \varepsilon^+_2\\
  & + & \ldots
\end{eqnarray*}
Generally we have summands of the form
\[ \mathd f^{\pm}_k (\varepsilon_0, \varepsilon^-_1, \varepsilon^+_1,
   \varepsilon^-_2, \varepsilon^+_2, \ldots, \varepsilon^-_{k - 1},
   \varepsilon^+_{k - 1}) \varepsilon^{\pm}_k . \]
Here, blocks like $\varepsilon^-_{l d}, \varepsilon^+_{l d}, \ldots,
\varepsilon^-_{l d + (d - 1)}, \varepsilon^+_{l d + (d - 1)}$ are encoded by
one (the $l$th) copy of $\mathbb{T}^d$, where We count $\varepsilon_0, \ldots,
\varepsilon^-_{d - 1}, \varepsilon^+_{d - 1}$ as a result of the $0$th copy of
$\mathbb{T}^d$. This fact is reflected in the indexing below. A typical term
looks as follows, where $k \geqslant 0$ and $0 \leqslant m \leqslant d - 1$.
\begin{eqnarray*}
&& \mathd f^{\pm}_{k d + m} (\varepsilon_0, \varepsilon^-_1, \varepsilon^+_1,
   \varepsilon^-_2, \varepsilon^+_2, \ldots, \varepsilon^-_{k d + (m - 1)},
   \varepsilon^+_{k d + (m - 1)}) \varepsilon^{\pm}_{k d + m} \\
&=& \mathd F^{\pm}_{k d + m} (\vec{\theta}^0, \ldots, \vec{\theta}^k) \varphi^{\pm}(\theta_m^k) . 
\end{eqnarray*}   
To be precise, $\mathd F^{\pm}_{k d + m}$ only depends upon $(\theta_0^k,
\ldots, \theta_{m - 1}^k)$ in the last variable cluster $\vec{\theta}^k$.
Write
\[ \mathd F_{\tmmathbf{k}}^{\pm} (\vec{\theta}^0, \ldots, \vec{\theta}^k) =
   \sum_{m = 0}^{d - 1} \mathd F^{\pm}_{k d + m} (\vec{\theta}^0, \ldots,
   \vec{\theta}^k) \varphi^{\pm} (\theta_m^k) . \]
Assuming the Banach space valued functions are simple and of the form
$\sum^P_{p = 1} f_p x_p$ with $f_p$ smooth and scalar valued, we get
expressions $\mathd F^{\pm p}_{k d + m} $such as above for each $f_p$ that are
scalar valued.

\subsection{Preliminary key lemmata}\quad

\

The operators we claim model the Riesz transforms are simply sliced versions
of the original dyadic shift $\mathcal{S}_0 : h_{I_{\pm}} \mapsto \pm
h_{I_{\mp}}$. In the language of sign tosses, these are
\[ \mathcal{S}_1 : \left\{ \begin{array}{l}
     \varepsilon^{\pm}_{l d} \mapsto \pm \varepsilon^{\mp}_{l d}\\
     \varepsilon^{\pm}_{l d + 1} \mapsto 0\\
     \ldots\\
     \varepsilon^{\pm}_{l d + (d - 1)} \mapsto 0
   \end{array} \right., \mathcal{S}_2 : \left\{ \begin{array}{l}
     \varepsilon^{\pm}_{l d} \mapsto 0\\
     \varepsilon^{\pm}_{l d + 1} \mapsto \pm \varepsilon^{\mp}_{k d + 1}\\
     \ldots\\
     \varepsilon^{\pm}_{l d + (d - 1)} \mapsto 0
   \end{array} \right., \ldots, \mathcal{S}_d : \left\{ \begin{array}{l}
     \varepsilon^{\pm}_{l d} \mapsto 0\\
     \varepsilon^{\pm}_{l d + 1} \mapsto 0\\
     \ldots\\
     \varepsilon^{\pm}_{l d + (d - 1)} \mapsto \pm \varepsilon^{\mp}_{l d + (d
     - 1)}
   \end{array} \right. \]
with one exception when $l = 0$. Here, we recall that we conveined
$\mathcal{S}_1 : \varepsilon_0 \mapsto 0$. The dyadic Riesz vector is
$\vec{\mathcal{S}}$=$(\mathcal{S}_1, \ldots, \mathcal{S}_d)$. So
$\mathcal{S}_j : \varepsilon^{\pm}_{l d + j - 1} \mapsto \pm
\varepsilon^{\mp}_{l d + j - 1}$ and $\varepsilon^{\pm}_{l d + m} \mapsto 0$
if $m \neq j - 1$ with the convention $\mathcal{S}_1 : \varepsilon_0 \mapsto
0$.

\

Notice that $\tilde{R}_j$ on $\mathbb{T}^l$ does not see
$\varepsilon^{\pm}_{l d + m} = \varphi^{\pm} (\theta^l_m)$ for $m \neq j - 1$,
which will allow us to slice. In UMD spaces, slicing usually comes at a cost
of a copy of the UMD constant. We try to circumvent this dependence by the use
of the Riesz transform as dominating operator instead of the Hilbert
transform.

\

By density arguments mentioned above, we can write \
\begin{equation}
  \Phi^{\pm}_{\tmmathbf{k}} (\vec{\theta}^0, \ldots, \vec{\theta}^k) = \sum_{m
  = 0}^{d - 1} \Phi^{\pm}_{k d + m} (\vec{\theta}^0, \ldots, \vec{\theta}^k) =
  \sum_{p = 1}^P \sum^{d - 1}_{m = 0} \Phi^{\pm p}_{k d + m} (\vec{\theta}^0,
  \ldots, \vec{\theta}^k) x_p \label{denseelementk-1}
\end{equation}

where we use the notation
\[ \mathd F^{\pm p}_{k d + m} (\vec{\theta}^0, \ldots, \vec{\theta}^k)
   \varphi^{\pm} (\theta_m^k) = \Phi^{\pm p}_{k d + m} (\vec{\theta}^0,
   \ldots, \vec{\theta}^k) . \]
This allows us to have Fourier transforms such as
\[ \Phi^{\pm p}_{k d + m} (\vec{\theta}^0, \ldots, \vec{\theta}^k) =
   \sum_{\vec{l}^0 \in \mathbb{Z}^d} \ldots \sum_{\vec{l}^k \in \mathbb{Z}^d}
   e^{i \langle \vec{l}^0, \vec{\theta}^0 \rangle} \ldots e^{i \langle
   \vec{l}^k, \vec{\theta}^k \rangle} \alpha_{k d + m}^{\pm p} (\vec{l}^0,
   \ldots, \vec{l}^k) . \]
Recall that $\mathd F^{\pm p}_{k d + m} (\vec{\theta}^0, \ldots,
\vec{\theta}^k) \varphi^{\pm} (\theta_m^k)$ does not depend on $(\theta_{m +
1}^k, \ldots, \theta_{d - 1}^k)$, so that $\alpha_{k d + m}^{\pm p}
(\vec{l}^0, \ldots, \vec{l}^k) \neq 0 \Rightarrow l_{m + 1}^k = \cdots = l_{d
- 1}^k = 0$. Further, \ $\varphi^{\pm}$ has mean 0, so $\alpha_{k d + m}^{\pm
p} (\vec{l}^0, \ldots, (l_0^k, \ldots l_{m - 1}^k, 0, \ldots)) = 0$. To
summarize, $\alpha_{k d + m}^{\pm p}$ is supported where the last entry
$\vec{l}^k$ is of the form $l_m^k \neq 0$ and $l_{m + 1}^k = \cdots = l_{d -
1}^k = 0$.

\

By an approximation argument, we assume for now that the spectra are finite.
Let us denote by $E_{\tmmathbf{k}}$ the closure in $L_X^p$ for $p \in (0, 1)$
of expressions (\ref{denseelementk-1}) with the additional restrictions as
described above.

\

Consider the multiplier operators below inspired by the $j$th Riesz transform
as only acting on the last increment(s) in elements in $E_{\tmmathbf{k}}$. If
the last increment is $l_m^k$ (such as for $\Phi^{\pm}_{k d + m}$) with $l^k =
(l_0^k, \ldots, l_m^k, 0, \ldots 0)$ with $l_m^k \neq 0$ then use the
multiplier of the directional Hilbert transform
\[ \tilde{m}^k_j (\vec{l}^k) = \tilde{m}_j (0, \ldots, 0, l_m^k, 0, \ldots 0),
\]
where we recall $\tmop{sign} (0) = 0.$ Then define for $0 \leqslant m
\leqslant d - 1$
\begin{eqnarray*}
&& \widetilde{H_j}^k \Phi^{\pm}_{k d + m} (\vec{\theta}^0, \ldots,
   \vec{\theta}^k) \\
&=& \sum_{p = 1}^P \left( \sum_{\vec{l}^0 \in \mathbb{Z}^d}
   \ldots \sum_{\vec{l}^k \in \mathbb{Z}^d} \tilde{m}^k_j (\vec{l}^k) e^{i
   \langle \vec{l}^0, \vec{\theta}^0 \rangle} \ldots e^{i \langle \vec{l}^k,
   \vec{\theta}^k \rangle} \alpha_{k d + m}^{\pm p} (\vec{l}^0, \ldots,
   \vec{l}^k) \right) x_p . 
\end{eqnarray*}   
Notice that in order to have a contribution of $\widetilde{H_j}^k
\Phi^{\pm}_{k d + m}$, we need $m = j - 1$. Observe that this just means
moving the Riesz transform to the last increment:
\begin{eqnarray}
  \nonumber\widetilde{H_j}^k \Phi^{\pm}_{k d + m} (\vec{\theta}^0, \ldots,
  \vec{\theta}^k) 
  &=& \widetilde{H_j}^k \mathd F^{\pm}_{k d + m}
  (\vec{\theta}^0, \ldots, \vec{\theta}^k) \varphi^{\pm} (\theta_m^k)\\
  & =& \mathd
  F^{\pm}_{k d + m} (\vec{\theta}^0, \ldots, \vec{\theta}^k) \tilde{R}_j
  \varphi^{\pm} (\theta_m^k) . \label{insideRiesz}
\end{eqnarray}
To find the norm of the $d$ dimensional operator $\vec{\mathcal{S}}$, we will
dualize against a $d$ component test function $\vec{G} = (G_j)^d_{j = 1}$,
where each $G_j$ relates to $\Gamma_j$ as $F$ to $\Phi$.

\begin{lemma}
  \label{lmRmoved}Let $1 < p < \infty$ and $q$ its conjugate exponent. If
  $\Phi_{\tmmathbf{k}} \in E_{\tmmathbf{k}}$, $\Gamma_{j, \tmmathbf{r}} \in
  E_{\tmmathbf{r}}$ for all $1 \leqslant \tmmathbf{k}, \tmmathbf{r} \leqslant
  M, 1 \leqslant j \leqslant d$ one has
  \begin{eqnarray*}
    &  & \mathbb{E}^{\vec{\theta}} \sum_{j = 1}^d \left\langle \sum_{k = 1}^M
    \sum_{m = 0}^{d - 1} \sum_{\sigma} \widetilde{H_j}^k \Phi^{\sigma}_{k d +
    m} (\vec{\theta}^0, \ldots, \vec{\theta}^k), \right.\\
    && \hspace{4em}
    \left.
    \sum_{r = 1}^M \sum_{n =
    0}^{d - 1} \sum_{\rho} \Gamma^{\rho}_{j, r d + n} (\vec{\theta}^0,
    \vec{\theta}^1, \ldots, \vec{\theta}^r) \right\rangle_{X, X^{\star}}\\
    & \leqslant & \| \vec{R} \|_{L_X^p \rightarrow L_X^p (\ell^2)} \| F
    \|_{L_X^p} \| \vec{G} \|_{L_{X^{\star}}^q (\ell^2)}
  \end{eqnarray*}
\end{lemma}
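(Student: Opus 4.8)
The idea is to realise the left-hand side as a limit of pairings against the genuine continuous Riesz vector and then to invoke Hölder's inequality together with the mapping property that defines $\|\vec{R}\|$. By the identity~(\ref{insideRiesz}) the bilinear form equals
\[ \mathbb{E}^{\vec{\theta}}\sum_{j=1}^{d}\Big\langle \sum_{k=1}^{M}\sum_{\sigma}\mathd F^{\sigma}_{kd+(j-1)}(\vec{\theta}^{0},\ldots,\vec{\theta}^{k})\,\tilde{R}_{j}\varphi^{\sigma}(\theta^{k}_{j-1}),\ \Gamma_{j}\Big\rangle_{X,X^{\star}}, \]
since $\tilde{R}_{j}\varphi^{\sigma}(\theta^{k}_{m})$ vanishes unless $m=j-1$ and, when $m=j-1$, is the one--dimensional Hilbert transform of $\varphi^{\sigma}$ in the single variable $\theta^{k}_{j-1}$. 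The point is that such a one--variable Hilbert transform is produced by the $d$--dimensional continuous Riesz transform $R_{j}$ acting on a suitably highly oscillating function, so that after transference (Geiss--Montgomery-Smith--Saksman) we may work on $\mathbb{R}^{d}$ with $\vec{R}$ itself.

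First I would modulate everything onto a single copy of the torus. Fix a strictly increasing tuple of positive integers $\mathbf{N}=(N^{k}_{m})$, indexed by pairs $(k,m)$ in lexicographic order and chosen so that each $N^{k}_{m}$ dominates all the earlier ones. On $\mathbb{R}^{d}$ with coordinates $t=(t_{0},\ldots,t_{d-1})$, replace in the coded expansion of $F$ every factor $\varphi^{\sigma}(\theta^{k}_{m})$ by $\varphi^{\sigma}(N^{k}_{m}t_{m})$, obtaining an $X$--valued function $F_{\mathbf{N}}$; let $\Gamma_{j,\mathbf{N}}$ be the corresponding modulation of $\Gamma_{j}$, using the same frequencies so that distinct increments stay asymptotically orthogonal and the $F$--$G$ pairing is preserved. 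A single modulated increment lives at frequency $\sim N^{k}_{m}$ in the variable $t_{m}$ and depends on the other variables only through much smaller frequencies; hence, applied to it, the Fourier multiplier $-i\,\xi_{j-1}/|\xi|$ of $R_{j}$ agrees with $-i\operatorname{sign}(\xi_{j-1})\,\mathbf{1}_{\{j-1=m\}}$ up to an error that is small in operator norm --- if $j-1=m$ the component $\xi_{j-1}=\xi_{m}$ dominates $|\xi|$, if $j-1<m$ the much larger $\xi_{m}$ does, and if $j-1>m$ one has $\xi_{j-1}\ll\xi_{m}$. Decomposing the Riesz kernel into a main part producing the directional Hilbert transform and a remainder whose operator norm is controlled by ratios of consecutive entries of $\mathbf{N}$ (the kernel estimate adapted from Geiss--Montgomery-Smith--Saksman), and summing over the finitely many increments, one obtains
\[ \lim_{\mathbf{N}}\ \mathbb{E}\sum_{j=1}^{d}\big\langle R_{j}F_{\mathbf{N}},\ \Gamma_{j,\mathbf{N}}\big\rangle_{X,X^{\star}} \;=\; \mathbb{E}^{\vec{\theta}}\sum_{j=1}^{d}\Big\langle\sum_{k}\sum_{\sigma}\widetilde{H_{j}}^{k}\Phi^{\sigma}_{kd+(j-1)},\ \Gamma_{j}\Big\rangle_{X,X^{\star}}, \]
which is the left-hand side of the lemma.

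Since each modulation $t_{m}\mapsto N^{k}_{m}t_{m}$ is, asymptotically, an $L^{p}_{X}$--isometry, $\|F_{\mathbf{N}}\|_{L^{p}_{X}}\to\|F\|_{L^{p}_{X}}$ and $\|(\Gamma_{j,\mathbf{N}})_{j}\|_{L^{q}_{X^{\star}}(\ell^{2})}\to\|\vec{G}\|_{L^{q}_{X^{\star}}(\ell^{2})}$. For each fixed $\mathbf{N}$, Hölder's inequality applied to the $\ell^{2}$--valued pairing gives
\[ \mathbb{E}\sum_{j}\big\langle R_{j}F_{\mathbf{N}},\Gamma_{j,\mathbf{N}}\big\rangle \leqslant \|\vec{R}F_{\mathbf{N}}\|_{L^{p}_{X}(\ell^{2})}\,\|(\Gamma_{j,\mathbf{N}})_{j}\|_{L^{q}_{X^{\star}}(\ell^{2})} \leqslant \|\vec{R}\|_{L^{p}_{X}\to L^{p}_{X}(\ell^{2})}\,\|F_{\mathbf{N}}\|_{L^{p}_{X}}\,\|(\Gamma_{j,\mathbf{N}})_{j}\|_{L^{q}_{X^{\star}}(\ell^{2})}, \]
and letting $\mathbf{N}\to\infty$ along the chosen ordering yields the asserted estimate.

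I expect the main obstacle to be the control of the error (``interference'') terms in the middle step. In dimension $d$ one must oscillate every component variable in every cluster in order to separate all the increments, and the Riesz multiplier $-i\,\xi_{j-1}/|\xi|$ genuinely mixes the components, so --- unlike in the Hilbert transform case --- it does not collapse to a signum multiplier on the nose; one has to show that the remainder in the kernel decomposition is negligible in operator norm \emph{uniformly} in $d$ and $M$, and that the tuple $\mathbf{N}$ can be chosen so that all clusters and directions are separated, the $F$--$G$ pairing survives, and every error term is simultaneously small. This uniform kernel estimate, modelled on the technique of Geiss--Montgomery-Smith--Saksman, is the technical core of the argument.
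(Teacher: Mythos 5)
Your high-level mechanism is the same as the paper's: modulate by a rapidly separating tuple of frequencies so that, on each increment, the $d$-dimensional Riesz multiplier $-i\xi_{j-1}/|\xi|$ is dominated by a single coordinate and collapses to the directional signum (when $m=j-1$) or to zero (otherwise), with an error controlled by the frequency ratios and the kernel estimate from Geiss--Montgomery-Smith--Saksman; then Hölder against the Riesz vector norm. However, the precise modulation you choose creates a gap that the paper's own modulation is specifically designed to avoid. You replace the cluster variables $\vec\theta^0,\ldots,\vec\theta^M$ \emph{entirely} by a single $t\in\mathbb{T}^d$, substituting $\varphi^\sigma(\theta^k_m)\mapsto\varphi^\sigma(N^k_m t_m)$. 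The resulting $F_{\mathbf N}$ lives on $\mathbb{T}^d$, not on the original product space, and the assertions that $\|F_{\mathbf N}\|_{L^p_X}\to\|F\|_{L^p_X}$, $\|(\Gamma_{j,\mathbf N})_j\|_{L^q_{X^\star}(\ell^2)}\to\|\vec G\|_{L^q_{X^\star}(\ell^2)}$, and that the $F$--$G$ pairing is preserved in the limit are \emph{not} consequences of ``each modulation being asymptotically an $L^p_X$-isometry.'' What is really needed is that the joint law of the vector $(\varphi^\pm(N^k_m t_m))_{k,m}$ under Lebesgue measure on $\mathbb{T}^d$ converges to the law of independent sign tosses, i.e.\ a Weyl-type equidistribution argument à la Bourgain; you neither state nor prove this, and it is the delicate core of the compression-to-one-copy route.

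The paper sidesteps this entirely. Instead of compressing, it \emph{augments}: it introduces an auxiliary $\vec\eta\in\mathbb{T}^d$, translates $\theta^s_m\mapsto\theta^s_m+A^{sd+m+1}\eta_m$, and keeps the original $\mathbb{E}^{\vec\theta}$ alongside a new $\mathbb{E}^{\vec\eta}$. Translation invariance makes the added $\mathbb{E}^{\vec\eta}$ free (the pairing is unchanged), the Riesz transform is applied in the $\vec\eta$ variable only, and for each fixed $\vec\theta$ the $L^p_{\vec\eta}$ and $L^q_{\vec\eta}$ norms integrate exactly back to $\|F\|_{L^p_X}$ and $\|\vec G\|_{L^q_{X^\star}(\ell^2)}$ by Fubini and translation invariance --- no distributional limit is needed, and the $A^{-1}$ error in the multiplier is estimated termwise. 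If you want to keep your compression strategy you must supply the equidistribution lemma; if not, you should switch to the two-variable modulation, which makes the norm identities exact rather than asymptotic.
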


\begin{proof}

  \
  
  By an approximation argument, we may think of all arising functions as
  bandlimited and simple. We modulate as follows: Let $\vec{\eta} \in
  \mathbb{Z}^d$ an auxiliary variable and $A$ a large integer, much larger
  than any of the spectra. Replace
  \[ (\theta_0^s, \ldots, \theta_{d - 1}^s) \longmapsto (\theta_0^s + A^{s d +
     1} \eta_0, \ldots, \theta_{d - 1}^s + A^{s d + d} \eta_{d - 1}) . \]
  If we apply the $j$th Riesz transform $\tilde{R}_j$ to the modulated
  $\Phi^{\pm p}_{k d + m}$ \ in the variable $\vec{\eta} = (\eta_0, \ldots,
  \eta_{d - 1})$, then the multiplier $\tilde{m}_j (n_0, \ldots, n_{d - 1}) =
  - i \frac{n_{j - 1}}{| n |}$ is evaluated at
  \[ (l_0^0 A + l_0^1 A^{d + 1} + \cdots + l_0^k A^{k d + 1}, \ldots, l_{d -
     1}^0 A^d + \cdots + l_{d - 1}^k A^{k d + d}), \]
  recalling that $l_{m + 1}^k = \cdots = l_{d - 1}^k = 0$. Thus, $k d + m + 1$
  is the largest occuring power of $A$ by which we divide, using homogeneity
  of the multiplier. The only non-zero entry not multiplied by a negative
  power of $A$ is now $l_m^k \neq 0$. When $m = j - 1$, this is by Taylor's
  theorem up to an error of magnitude $A^{- 1}$ equal to $- i \tmop{sign}
  (l_{j - 1}^k)$. When $m \neq j - 1$, this is up to an error of magnitude
  $A^{- 1}$ equal to 0.
  
  Writing
  \[ \vec{\theta}^s + \vec{A}^s \vec{\eta} = (\theta_0^s + A^{s d + 1} \eta_0,
     \ldots, \theta_{d - 1}^s + A^{s d + d} \eta_{d - 1}), \]
  we define the differences
  \begin{eqnarray*}
     D^j_{k d + m, A} (\vec{\theta}^0, \ldots, \vec{\theta}^k,
    \vec{\eta})
    & =&  \tilde{R}_{j, \vec{\eta}} \Phi^{\pm}_{k d + m} (\vec{\theta}^0 +
    \vec{A}^0 \vec{\eta}, \ldots, \vec{\theta}^k + \vec{A}^{k d + d}
    \vec{\eta}) \\
    &&- (\widetilde{H_j}^k \Phi^{\pm}_{k d + m}) (\vec{\theta}^0 +
    \vec{A}^0 \vec{\eta}, \ldots, \vec{\theta}^k + \vec{A}^{k d + d}
    \vec{\eta}) .
  \end{eqnarray*}
  In the first term, we apply the Riesz transforms $\tilde{R}_{j, \vec{\eta}}$
  in $\vec{\eta}$ while we apply $\widetilde{H_j}^k$ directly to
  $\Phi^{\pm}_{k d + m}$ and modulate the outcome. $\widetilde{H_j}^k
  \Phi^{\pm}_{k d + m}$ has multiplier $\tilde{m}_j^k (\vec{l}^k) =
  \tilde{m}_j (0, \ldots 0, l_m^k, 0, \ldots 0)$ with $l_m^k$ the last
  non-zero entry. From the above it follows that it differs by $A^{- 1}$ from
  the multiplier arising from the other term. We thus have for each $k$ and
  all $0 \leqslant m \leqslant d - 1$, a uniform estimate as follows: $\|
  D^j_{k d + m, A} (\vec{\theta}^0, \ldots, \vec{\theta}^k, \vec{\eta}) \|_X$
  is bounded by a multiple of finitely many terms of the form \ $\frac{1}{A} |
  \alpha^{\pm p}_{k d + m} (\vec{l}^0, \ldots, \vec{l}^k) | \| x_p \|_X$.
  
  We get in the usual way through successive integrations and using
  translation invariance:
  \begin{eqnarray}
    &  & \left| \mathbb{E}^{\vec{\theta}} \sum_{j = 1}^d \left\langle \sum_{k
    = 1}^M \sum_{m = 0}^{d - 1} \sum_{\sigma} \widetilde{H_j}^k
    \Phi^{\sigma}_{k d + m} (\vec{\theta}^0, \ldots, \vec{\theta}^k), \sum_{r
    = 1}^M \sum_{n = 0}^{d - 1} \sum_{\rho} \Gamma^{\rho}_{j, r d + n}
    (\vec{\theta}^0, \ldots, \vec{\theta}^r) \right\rangle_{{X, X^{\star}}^{}}
    \right| \nonumber\\
    & = & \left| \mathbb{E}^{\vec{\eta}} \mathbb{E}^{\vec{\theta}} \sum_{j =
    1}^d \left\langle \sum_k \sum_m \sum_{\sigma} \widetilde{H_j}^k
    \Phi^{\sigma}_{k d + m} (\vec{\theta}^0, \ldots, \vec{\theta}^k), \sum_{r
    = 1}^M \sum_{n = 0}^{d - 1} \sum_{\rho} \Gamma^{\rho}_{j, r d + n}
    (\vec{\theta}^0, \ldots, \vec{\theta}^r) \right\rangle_{{X, X^{\star}}^{}}
    \right| \nonumber\\
    & = & \left| \mathbb{E}^{\vec{\eta}} \mathbb{E}^{\vec{\theta}} \sum_{j =
    1}^d \langle \tmmathbf{\Phi}_{\vec{\theta}}^{\tilde{H}_j} (\vec{\eta}),
    \tmmathbf{\Gamma}_{j, \vec{\theta}} (\vec{\eta}) \rangle_{{X,
    X^{\star}}^{}} \right|  \label{last}
  \end{eqnarray}
  where
  \[ \tmmathbf{\Phi}_{\vec{\theta}}^{\tilde{H}_j} (\vec{\eta}) = \sum_{k =
     1}^M \sum_{m = 0}^{d - 1} \sum_{\sigma} \widetilde{H_j}^k
     \Phi^{\sigma}_{k d + m} (\vec{\theta}^0 + \vec{A}^0 \vec{\eta}, \ldots,
     \vec{\theta}^k + \vec{A}^k \vec{\eta}) \]
  and
  \[ \tmmathbf{\Gamma}_{j, \vec{\theta}} (\vec{\eta}) = \sum_{r = 1}^M
     \sum_{n = 0}^{d - 1} \sum_{\rho} \Gamma^{\rho}_{j, r d + n}
     (\vec{\theta}^0 + \vec{A}^0 \vec{\eta}, \ldots, \vec{\theta}^r +
     \vec{A}^r \vec{\eta}) . \]
  Writing
  \[ \tmmathbf{\Phi}_{\vec{\theta}}^{\tilde{R}_{j, \vec{\eta}}} (\vec{\eta}) =
     \sum_{k = 1}^M \sum_{m = 0}^{d - 1} \sum_{\sigma} \widetilde{R }_{j,
     \vec{\eta}} \Phi^{\sigma}_{k d + m} (\vec{\theta}^0 + \vec{A}^0
     \vec{\eta}, \ldots, \vec{\theta}^k + \vec{A}^k \vec{\eta}), \]
  we note that
  \begin{eqnarray*}
    &  & \left| \mathbb{E}^{\vec{\eta}} \mathbb{E}^{\vec{\theta}} \sum_{j =
    1}^d \langle \tmmathbf{\Phi}_{\vec{\theta}}^{\tilde{H}_j} (\vec{\eta})
    -\tmmathbf{\Phi}^{\tilde{R}_{j, \vec{\eta}}}_{\vec{\theta}} (\vec{\eta}),
    \tmmathbf{\Gamma}_{j, \vec{\theta}} (\vec{\eta}) \rangle_{{X,
    X^{\star}}^{}} \right|\\
    & \leqslant & \mathbb{E}^{\vec{\eta}} \mathbb{E}^{\vec{\theta}} \left(
    \sum_{j = 1}^d \| \tmmathbf{\Phi}_{\vec{\theta}}^{\tilde{H}_j}
    (\vec{\eta}) -\tmmathbf{\Phi}^{\tilde{R}_{j, \vec{\eta}}}_{\vec{\theta}}
    (\vec{\eta}) \|^2_X \right)^{1 / 2} \left( \sum_{j = 1}^d \|
    \tmmathbf{\Gamma}_{j, \vec{\theta}} (\vec{\eta}) \|^2_{{X^{\star}}^{}}
    \right)^{1 / 2}\\
    & \lesssim & \frac{1}{A} \| \vec{G} \|_{{L^q_{{X^{\star}}^{}}}  (\ell^2)}
    . 
  \end{eqnarray*}
  We may thus replace the expression in (\ref{last}) by
  \[ | \mathbb{E}^{\vec{\eta}} \mathbb{E}^{\vec{\theta}} \langle
     \tmmathbf{\Phi}_{\vec{\theta}}^{\tilde{R}_j, \vec{\eta}} (\vec{\eta}),
     \tmmathbf{\Gamma}_{j, \vec{\theta}} (\vec{\eta}) \rangle_{{X,
     X^{\star}}^{}} | \]
  at the cost of an error proportional to $A^{- 1}$. We thus inherit the bound
  from $\| \vec{R} \|_{L_X^p \rightarrow L_X^p (\ell^2)}$ and the proof of the
  lemma is complete.
  
  \ 
\end{proof}

Next, we have need to compare $\tilde{R}_j$ and $\mathcal{S}_j$ in a projected
form. Let us also define the operator $\pi_j$ on functions $f$ defined on
$\mathbb{T}^d$ by $\pi_j (f) (\theta_0, \ldots, \theta_{d - 1}) = \sum^1_{n =
- 2} \langle f (\theta_0, \ldots, \theta_{j - 1}, \cdot, \theta_j, \ldots,
\theta_{d - 1}) \rangle_{A_n}$, where $\langle \cdot \rangle_{A_n}$ means the
average and the arcs $A_n$ correspond to angles $[\nobracket n \pi / 2, (n +
1) \pi / 2) \nobracket$.

\begin{lemma}
  \label{lmHvsS}Denote by $\varphi_i^{\pm} (\theta_0, \ldots, \theta_{d - 1})
  = \varphi^{\pm} (\theta_{i - 1})$. There exists $c_0 > 0$ such that there
  holds
  \[ \pi_1 \tilde{R}_j \varphi_i^{\pm} = c_0 \mathcal{S}_j \varphi_i^{\pm} \]
  for all $0 \leqslant i \leqslant d - 1$.
\end{lemma}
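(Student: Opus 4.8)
The plan is to compute both sides explicitly, after separating the cases $j \neq i$ and $j = i$. The function $\varphi_i^{\pm}$ depends only on the single variable $\theta_{i-1}$, so its $d$-dimensional Fourier transform is supported on $\{n \in \mathbb{Z}^d : n_l = 0 \text{ for } l \neq i-1\}$, and $\varphi^{\pm}$ has mean zero. On such frequencies the multiplier $\tilde{m}_j(n) = -i\, n_{j-1}/|n|$ of $\tilde{R}_j$ vanishes whenever $j - 1 \neq i - 1$, so $\tilde{R}_j \varphi_i^{\pm} = 0$, hence $\pi_1 \tilde{R}_j \varphi_i^{\pm} = 0$ for $j \neq i$. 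On the other side $\varphi_i^{\pm}$ corresponds to increments in the slice $\mathcal{D}_{i-1}$, which $\mathcal{S}_j$ annihilates when $j \neq i$; thus $\mathcal{S}_j \varphi_i^{\pm} = 0$ as well, and the identity holds trivially in this case.

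It remains to treat $j = i$, where $\tilde{R}_i$ reduces to the one-dimensional periodic Hilbert transform $H$ in the variable $\theta_{i-1}$, with $H \cos n\theta = \sin n\theta$ and $H \sin n\theta = -\cos n\theta$ for $n \geqslant 1$. Using the classical Fourier series $\tmop{sqcos}\theta = \frac{4}{\pi}\sum_{k \geqslant 0} \frac{(-1)^k}{2k+1}\cos((2k+1)\theta)$ and $\tmop{sqsin}\theta = \frac{4}{\pi}\sum_{k \geqslant 0}\frac{1}{2k+1}\sin((2k+1)\theta)$, one gets $H\,\tmop{sqcos}\theta = \frac{4}{\pi}\sum_{k \geqslant 0}\frac{(-1)^k}{2k+1}\sin((2k+1)\theta)$ and $H\,\tmop{sqsin}\theta = -\frac{4}{\pi}\sum_{k \geqslant 0}\frac{1}{2k+1}\cos((2k+1)\theta)$; in particular both are supported on odd frequencies. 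The next point is to identify $\pi_1$: the four arcs $A_{-2}, A_{-1}, A_0, A_1$ partition $\mathbb{T}$, the functions $\tmop{sqcos}$ and $\tmop{sqsin}$ are constant on each of them, and $\tmop{sqcos}\,\theta\cdot\tmop{sqsin}\,\theta = \tmop{sign}(\sin 2\theta) = \tmop{sqsin}(2\theta)$, so $\{1, \tmop{sqcos}, \tmop{sqsin}, \tmop{sqsin}(2\,\cdot\,)\}$ is an orthonormal basis of the four-dimensional space of functions constant on the $A_n$, and $\pi_1$ (in the coordinate carrying $\varphi_i^{\pm}$) is exactly the orthogonal projection onto that span.

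To finish, I compute the four coefficients of $\pi_1 H\,\tmop{sqcos}$: the mean is zero; the pairing with $\tmop{sqcos}$ vanishes by parity; the pairing with $\tmop{sqsin}(2\,\cdot\,)$ vanishes since that function lives on even frequencies while $H\,\tmop{sqcos}$ lives on odd ones; and the pairing with $\tmop{sqsin}$ equals $\frac{8}{\pi^2}\sum_{k \geqslant 0}\frac{(-1)^k}{(2k+1)^2} = \frac{8G}{\pi^2}$, with $G$ Catalan's constant, so $\pi_1 H\,\tmop{sqcos} = \frac{8G}{\pi^2}\,\tmop{sqsin}$; the same argument gives $\pi_1 H\,\tmop{sqsin} = -\frac{8G}{\pi^2}\,\tmop{sqcos}$. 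Since $\mathcal{S}_i \varphi_i^{+} = \varphi_i^{-}$ and $\mathcal{S}_i \varphi_i^{-} = -\varphi_i^{+}$, this is precisely the asserted identity with $c_0 = 8G/\pi^2 > 0$. The only mildly delicate points are the clean identification of $\pi_1$ with projection onto $\mathrm{span}\{1, \tmop{sqcos}, \tmop{sqsin}, \tmop{sqsin}(2\,\cdot\,)\}$ and the parity/frequency bookkeeping that annihilates three of the four coefficients; everything else is direct computation, and one checks separately that the exceptional convention $\mathcal{S}_1 : \varepsilon_0 \mapsto 0$ plays no role here, as $\pi_1$ is applied only in clusters where that exception is absent.
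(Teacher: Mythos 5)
Your proof is correct and structurally parallel to the paper's: both split into an off-diagonal case, where $\tilde{R}_j\varphi_i^{\pm}$ and $\mathcal{S}_j\varphi_i^{\pm}$ are simultaneously zero, and a diagonal case, where the $j$th Riesz transform degenerates to the one-dimensional periodic Hilbert transform in the single live variable. The paper's proof at that point just cites {\cite{DomPet2022a}} for the Hilbert-transform identity $\pi H\varphi^{\pm}=c_0\,\mathcal{S}\varphi^{\pm}$; you instead carry out the Fourier computation from scratch and produce the explicit constant $c_0 = 8G/\pi^2$ with $G$ Catalan's constant. Your key step --- recognizing that $\{1,\ \tmop{sqcos},\ \tmop{sqsin},\ \tmop{sqcos}\cdot\tmop{sqsin}\}$ is an orthonormal basis for the space of functions constant on the quarter-arcs, so $\pi_1$ is literally the $L^2$ projection onto that span in the relevant coordinate, after which mean-zero, parity, and the odd/even frequency split kill three of the four coefficients --- is exactly the mechanism behind the cited fact, so this is the same route with the details supplied rather than a genuinely different argument. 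Two small notational points worth flagging: (i) you identify the nontrivial case as $i=j$ while the paper's proof writes $i=j-1$; given the lemma's own convention $\varphi_i^{\pm}=\varphi^{\pm}(\theta_{i-1})$ and torus coordinates $\theta_0,\ldots,\theta_{d-1}$, your reading is the internally consistent one and the paper's line is an off-by-one slip, matching the fact that the stated range $0\leqslant i\leqslant d-1$ does not fit $\theta_{i-1}$ either; (ii) reading the literal ``$\pi_1$'' as the projection in the coordinate carrying $\varphi_i^{\pm}$, as you do, is the only interpretation under which the claim can hold for $i>1$, and is consistent with how $\pi$ is used later in the proof of Theorem~\ref{th1}.
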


\begin{proof}
  $\tilde{R}_j \varphi_i^{\pm} =\mathcal{S}_j \varphi_i^{\pm} = 0$ when $i
  \neq j - 1$. $\tilde{R}_j \varphi_{j - 1}^{\pm} =\mathcal{H}_j \varphi_{j -
  1}^{\pm}$. Now it suffices to use the corresponding fact for the Hilbert
  transform found in {\cite{DomPet2022a}}.
\end{proof}

\subsection{The proof of Theorem \ref{th1}}

\begin{proof}
  Let us first assume that $\langle f \rangle_{I_0} = \langle f, h_{I_0}
  \rangle = 0$. We estimate the norm of the dyadic Riesz vector by duality,
  using Lemma \ref{lmHvsS} in the last equality.
  \begin{eqnarray*}
    &  & \mathbb{E}^{\vec{\theta}} \sum_{j = 1}^d \left\langle \mathcal{S}_j
    \sum_{k = 1}^K \sum_{m = 0}^{d - 1} \sum_{\sigma} \mathd F^{\sigma}_{k d +
    m} (\vec{\theta}^0, \ldots, \vec{\theta}^k) \varphi^{\sigma} (\theta_m^k),
    \right.\\
    &  & \hspace{2em} \left. \sum_{r = 1}^R \sum_{n = 0}^{d - 1} \sum_{\rho} \mathd
    G^{\rho}_{j, r d + n} (\vec{\theta}^0, \ldots, \vec{\theta}^r)
    \varphi^{\rho} (\theta_n^r) \right\rangle_{X, X^{\star}}\\
    & = & \mathbb{E}^{\vec{\theta}} \sum_{j = 1}^d \left\langle \sum_{k =
    1}^K \sum_{m = 0}^{d - 1} \sum_{\sigma} \mathd F^{\sigma}_{k d + m}
    (\vec{\theta}^0, \ldots, \vec{\theta}^k) \mathcal{S}_j \varphi^{\sigma}
    (\theta_m^k), \right.\\
    &  & \hspace{2em}\left. \sum_{r = 1}^R \sum_{n = 0}^{d - 1} \sum_{\rho} \mathd
    G^{\rho}_{j, r d + n} (\vec{\theta}^0, \ldots, \vec{\theta}^r)
    \varphi^{\rho} (\theta_n^r) \right\rangle_{X, X^{\star}}\\
    & = & c_0^{- 1} \mathbb{E}^{\vec{\theta}} \sum_{j = 1}^d \left\langle
    \sum_{k = 1}^K \sum_{m = 0}^{d - 1} \sum_{\sigma} \mathd F^{\sigma}_{k d +
    m} (\vec{\theta}^0, \ldots, \vec{\theta}^k) \pi \tilde{R}_j
    \varphi^{\sigma} (\theta_m^k), \right.\\
    &  & \hspace{2em}\left. \sum_{r = 1}^R \sum_{n = 0}^{d - 1} \sum_{\rho} \mathd
    G^{\rho}_{j, r d + n} (\vec{\theta}^0, \ldots, \vec{\theta}^r)
    \varphi^{\rho} (\theta_n^r) \right\rangle_{X, X^{\star}} .
  \end{eqnarray*}
  Thanks to the identity (\ref{insideRiesz}), it suffices to compare this to
  \begin{eqnarray*} 
  &&c_0^{- 1} \mathbb{E}^{\vec{\theta}} \sum_{j = 1}^d \left\langle \sum_{k =
     1}^K \sum_{m = 0}^{d - 1} \sum_{\sigma} \mathd F^{\sigma}_{k d + m}
     (\vec{\theta}^0, \ldots, \vec{\theta}^k) \tilde{R}_j \varphi^{\sigma}
     (\theta_m^k), \right. \\
     &&\hspace{2em}
     \left. \sum_{r = 1}^R \sum_{n = 0}^{d - 1}
     \sum_{\rho} \mathd G^{\rho}_{j, r d + n} (\vec{\theta}^0, \ldots,
     \vec{\theta}^r) \varphi^{\rho} (\theta_n^r) \right\rangle_{X, X^{\star}}
  \end{eqnarray*}

  Since $\pi \tilde{R}_j \varphi^{\sigma} (\theta_m^k)$, $\tilde{R}_j
  \varphi^{\sigma} (\theta_m^k)$ and $\varphi^{\rho} (\theta_n^r)$ have mean
  zero, only diagonal terms with $k = r$ and $m = n$ yield non-zero
  contributions (and it is required that $m = j - 1$). But for diagonal terms,
  observe that for, say, $\theta = \theta_{j - 1}^k$,
  \[ \int_{\mathbb{T}} \pi \tilde{R}_j \varphi^{\sigma} (\theta)
     \varphi^{\rho} (\theta) \mathd \theta = \int_{\mathbb{T}} \tilde{R}_j
     \varphi^{\sigma} (\theta) \varphi^{\rho} (\theta) \mathd \theta \]
  because $\varphi^{\rho} (\theta)$ is constant on the quarter arcs. This
  suffices to see that the two expressions are the same. We apply the Lemma
  \ref{lmRmoved} to finish the estimate for the case $\langle f \rangle_{I_0}
  = \langle f, h_{I_0} \rangle = 0$. To pass to the general case, one uses the
  universal bound for averaging operators. 
\end{proof}
\[ \  \]


\begin{thebibliography}{1}
  \bibitem[1]{Bou1983a}J.~Bourgain. {\newblock}Some remarks on Banach spaces
  in which martingale difference sequences are unconditional.
  {\newblock}\tmtextit{Ark. Mat.}, 21(2):163--168, 1983.{\newblock}
  
  \bibitem[2]{DomPet2022a}Komla Domelevo  and  Stefanie Petermichl.
  {\newblock}The dyadic and the continuous Hilbert transforms with values in
  Banach spaces. {\newblock}\tmtextit{ArXiv preprint}, 12 2022.
  {\newblock}\url{https://doi.org/10.48550/arXiv.2212.00090}.{\newblock}
  
  \bibitem[3]{DomPet2023a}Komla Domelevo  and  Stefanie Petermichl.
  {\newblock}The dyadic and the continuous Hilbert transforms with values in
  Banach spaces. part2. {\newblock}03 2023.{\newblock}
  
  \bibitem[4]{GeiMonSak2010a}Stefan Geiss, Stephen Montgomery-Smith, and  Eero
  Saksman. {\newblock}On singular integral and martingale transforms.
  {\newblock}\tmtextit{Trans. Amer. Math. Soc.}, 362(2):553--575,
  2010.{\newblock}
  
  \bibitem[5]{Tre2013a}Sergei Treil. {\newblock}Sharp $A_2$ estimates of Haar
  shifts via Bellman function. {\newblock}In \it{Recent trends in
  analysis. Proceedings of the conference in honor of Nikolai Nikolski on the
  occasion of his 70th birthday, Bordeaux, France, August 31 -- September 2,
  2011},  pages  187--208. Bucharest: The Theta Foundation, 2013.{\newblock}
\end{thebibliography}
\end{document}